\newtheorem{definition}[]{Definition} 
\newtheorem{theorem}[definition]{Theorem}
\newtheorem{proposition}[definition]{Proposition}
\newtheorem{remark}[definition]{Remark}
\theoremstyle{definition}
\def\CA{{\mathcal A}}
\def\CJ{{\mathcal J}}
\def\CM{{\mathcal M}}
\newcommand{\C}{\mathbb{C}}
\title{Spectral data for $U(m,m)$-Higgs bundles}
\author{Laura P. Schaposnik }
\address{Mathematical Institute, University of Oxford, 24-29 St. GilesÕ, Oxford, OX1 3LB, UK.}
\address{\textit{Current address:}\newline
Mathematisches Institut, Ruprecht-Karls-UniversitŠt Heidelberg , 69120 Heidelberg, Germany}
\email{schaposnik@mathi.uni-heidelberg.de}
\date{\today}
\begin{document}

\baselineskip=1.1\baselineskip

\begin{abstract}
We  define and study spectral data associated to $U(m,m)$-Higgs bundles through the $GL(2m,\C)$ Hitchin fibration.  We give a new interpretation  of the topological invariants involved, as well as a geometric description of  the moduli space.
\end{abstract}
\maketitle

Examples of  $G$-Higgs bundles on a Riemann surface $\Sigma$ of genus $g\geq 2$,  for $G$ a real form of a complex Lie group $G_{c}$, were initially considered in \cite{N1,N5}. 
Whilst the particular case of  $G$-Higgs bundles for the unitary group with signature has received much attention in the past decade (e.g., see \cite{brad} and references therein), the work found in the literature is mostly on the counting of connected components via  Morse theory, where the critical points lie in the most singular fibre of the corresponding Hitchin fibration. By contrast, this paper is dedicated to the study of the geometry of the moduli space of $U(m,m)$-Higgs bundles via the most generic fibres of the $GL(2m,\C)$ Hitchin fibration.

\begin{definition} \label{higgsupp} A $U(m,m)$-Higgs bundle over $\Sigma$ is a pair $(V,\Phi)$, where
 $V=W_1\oplus W_2$ for $W_i$ rank $m$ vector bundles over $\Sigma$, and the Higgs field $\Phi$ is given by
 \begin{eqnarray}\label{Higgs:eq}
        \Phi=\left( \begin{array}
          {cc} 0&\beta\\
\gamma&0
         \end{array}\right), 
        \end{eqnarray}
where $\beta:W_2\rightarrow W_1\otimes K$ and $\gamma:W_1\rightarrow W_2 \otimes K$ are holomorphic sections for $K$ the canonical bundle of $\Sigma$.
\end{definition}

 We define the spectral data for $U(m,m)$-Higgs bundles in Section \ref{data1}, and
 in Section \ref{invariant}  we describe the topological invariants  (Proposition \ref{teo2}) and the Milnor-Wood type inequalities they satisfy, leading to  our main result:
\vspace{0.1 in}
 
\noindent 
\textbf{Theorem \ref{teoprin}.} \textit{There is a 1-to-1 correspondence between isomorphism classes of  $U(m,m)$-Higgs bundles $(W_1\oplus W_2, \Phi)$    for which ${\rm char}(\Phi)$ defines a smooth curve and $\deg W_1 > \deg W_2 $, and triples $(\bar S,E_+,D)$ where }
\begin{itemize}
 \item  \textit{ $\bar \pi:\bar S\rightarrow \Sigma$ is a smooth $m$-fold cover of $\Sigma$ in the total space of $K^{2}$ defined by }
\begin{eqnarray}\xi^{m}+a_{1}\xi^{m-1}+\ldots+a_{m-1}\xi+a_{m}=0,
\nonumber
 \end{eqnarray}
\textit{for $a_{i}\in H^{0}(\Sigma,K^{2i})$ and $\xi$ the tautological section of the pullback of $K^{2};$}
 \item  \textit{$E_+$ is a line bundle on $\bar S$ whose degree is 
$\deg E_+ = \deg W_1 +(2m^{2}-2m)(g-1) ;$}
\item \textit{$D$ is a positive subdivisor of the divisor of $a_{m}$ of degree $M=\deg W_2-\deg W_1+2m(g-1).$}
\end{itemize}
 \vspace{0.1 in}
 
Interchanging $W_1$ and $W_2$, the above correspondence holds for any $U(m,m)$-Higgs bundle for which $\deg W_1\neq \deg W_2$, and in the case of $W_1=W_2$, we show that the construction fails to be 1-to-1.
 For each choice of invariants  $\deg E_+, M$, in Section \ref{defEE}  we give a geometric description of a distinguished component of the moduli space $\CM_{U(m,m)}$ of $U(m,m)$-Higgs bundles that intersects a generic fibre of the classical Hitchin fibration (Theorem \ref{something4}), studying all the components which are known to exist   (see \cite[p.~116]{brad}). 
 
Finally, we   specialize the above results to study the moduli space of $SU(m,m)$-Higgs bundles as sitting inside the smooth fibres of the $SL(2m,\C)$ Hitchin fibration in Section \ref{ap2}.  \\

\emph{Acknowledgements.}  The results presented in this paper form part of the author's DPhil Thesis at the University of Oxford \cite[Chapters 6,9]{laura}. The author is thankful to Nigel Hitchin  for  his supervision, and to Alan Thompson and Frank Gounelas for helpful discussions. This work was funded by Oxford University Press, New College,  QGM  Aarhus,  and the European Science Foundation through the research grant ITGP Short Visit 4655.

\section{\texorpdfstring{SPECTRAL DATA FOR $U(m,m)$-HIGGS BUNDLES}{Spectral data for U(m,m)-Higgs bundles}}\label{data1}

As shown in  \cite{N2}, there is a natural fibration of the moduli space $\CM$ of classical Higgs bundles of rank $2m$ and fixed degree, the so-called Hitchin fibration, which maps isomorphism classes of Higgs bundles to the coefficients of the characteristic polynomial of the Higgs field
\begin{eqnarray}
h:\CM&\rightarrow& \CA:= \bigoplus_{i=1}^{2m} H^0( \Sigma , K^{i}).\label{hit:map}
\end{eqnarray}
The regular fibre of the Hitchin fibration is isomorphic to the Jacobian of a curve $S$  in the total space of $K$ defined by the characteristic polynomial of $\Phi$. In particular, given a line bundle $E$ on $\pi:S\rightarrow \Sigma$, one can obtain a classical Higgs pair $(V,\Phi)$ by taking the direct image
$V:=\pi_{*}E,$
and letting the Higgs field $\Phi$ be the map obtained through the direct image of   the tautological  section $ E\xrightarrow{\eta} E\otimes \pi^* K$.

\begin{remark} \label{rem:fix} From Definition \ref{higgsupp}, it follows that $U(m,m)$-Higgs bundles can be seen as fixed points in $\mathcal{M}$ of the involution
$\Theta: (V,\Phi)\mapsto (V,-\Phi) $
 on  $\mathcal{M}$ corresponding to vector bundles $V$ which have an automorphism sending $\Phi$ to $-\Phi$ conjugate to  
 \[I_{m,m}=\left(\begin{array}{cc}-I_{m}&0\\0&I_{m}\end{array}\right).\]
\end{remark}

The characteristic polynomial of a $U(m,m)$-Higgs field $\Phi$, which from the above remark is invariant under $\Phi\mapsto -\Phi$,  defines the spectral curve $\pi:S\rightarrow \Sigma$ in the total space of $K$ with equation  
\begin{eqnarray}\eta^{2m}+a_{1}\eta^{2m-2}+\ldots+a_{m-1}\eta^{2}+a_{m}=0,\label{curves}\label{eq:spec2}\end{eqnarray}
for $a_{i}\in H^{0}(\Sigma,K^{2i})$. 
%
Throughout  the paper, we shall assume the curve $S$ to be smooth.

 The $2m$-fold cover $S$ has a natural involution $\sigma: \eta \mapsto -\eta$, and so we may  consider the quotient curve $\bar \pi:\overline{S}=S/\sigma \rightarrow \Sigma$ in the total space of $K^{2}$. This is an $m$-fold cover of $\Sigma$ which is smooth since $S$ is assumed to be smooth,  and has equation
\begin{eqnarray}
\xi^{m}+a_{1}\xi^{m-1}+\ldots+a_{m-1}\xi+a_{m}=0,
 \label{curvess}
\end{eqnarray}
for $\xi=\eta^{2}$ the tautological section of $\overline{\pi}^{*}K^{2}$. 
Let  $g_{S}$ and $g_{\overline{S}}$ be the genus of  $S$ and $\overline{S}$,  respectively, and $K_{S}$ and $K_{\bar S}$ their canonical bundles. 
By 
 the adjunction formula, we have  
  $K_{S}\cong \pi^{*}K^{2m}$ and  $K_{\bar S}\cong  \bar \pi^{*} K^{2m} \otimes \bar \pi^*K^{-1}$,   hence
$g_{S}=4m^{2}(g-1)+1,$ and  $g_{\overline{S}}= (2m^{2}-m)(g-1)+1.$

Under certain conditions, the line bundle $E\in {\rm Jac}(S)$ associated to a $GL(2m,\C)$-Higgs bundle defines a $U(m,m)$-Higgs bundle:

\begin{proposition}Let $S$ be a smooth curve in the total space of $K$ as in (\ref{eq:spec2}) with a natural involution $\sigma:\eta\mapsto -\eta$, and $E$ a line bundle on it. Then, $E$ defines a $U(m,m)$-Higgs bundle if and only if $\sigma^* E\cong E$. \label{PropE}
\end{proposition}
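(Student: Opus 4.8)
The plan is to show that the spectral correspondence recalled above intertwines the involution $\Theta:(V,\Phi)\mapsto(V,-\Phi)$ of Remark~\ref{rem:fix} with the involution $E\mapsto\sigma^*E$ on ${\rm Pic}(S)$, and then to identify the $\sigma^*$-fixed line bundles with the $U(m,m)$-type fixed points. The key computation is purely local over $\Sigma$. Writing $\pi^{-1}(U)={\rm Spec}\,A$ with $A=\CO(U)[\eta]/(p(\eta))$ a free $\CO(U)$-module of rank $2m$ (here $p$ is the even polynomial (\ref{eq:spec2})), a line bundle $E$ is a rank-one $A$-module $M$, $V=\pi_*E$ is $M$ regarded as an $\CO(U)$-module, and $\Phi$ is multiplication by $\eta$. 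Because $p$ is even, $\sigma$ corresponds to the $\CO(U)$-algebra automorphism $\eta\mapsto-\eta$, and $\sigma^*E$ is $M$ with $A$-action twisted by this automorphism. Hence $\pi_*(\sigma^*E)\cong V$ as $\CO(U)$-modules, while multiplication by $\eta$ on $\sigma^*E$ becomes $-\Phi$. Globalising, the Higgs bundle attached to $\sigma^*E$ is isomorphic to $(V,-\Phi)=\Theta(V,\Phi)$; this is the heart of the argument.

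Granting this, the two implications are short. First I would do ($\Rightarrow$): if $E$ defines a $U(m,m)$-Higgs bundle then $(V,\Phi)$ is fixed by $\Theta$, so $(V,\Phi)\cong(V,-\Phi)$; by the key statement $(V,-\Phi)$ corresponds to $\sigma^*E$, and since the spectral correspondence is a bijection on isomorphism classes, $E\cong\sigma^*E$. Conversely ($\Leftarrow$), an isomorphism $\psi:\sigma^*E\xrightarrow{\sim}E$ pushes down to an automorphism $f$ of $V$ with $f\Phi f^{-1}=-\Phi$, exhibiting $(V,\Phi)$ as a $\Theta$-fixed point.

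It remains to check that this fixed point is of $U(m,m)$-type, i.e. that the intertwining automorphism is conjugate to $I_{m,m}$. After rescaling $\psi$ so that $\psi\circ\sigma^*\psi={\rm id}$ (possible since $\psi\circ\sigma^*\psi$ is a nonzero scalar on the connected curve $S$ and $\C$ has square roots), I would pass to the double cover $q:S\to\overline S=S/\sigma$, so that $\pi=\bar\pi\circ q$ and $V=\bar\pi_*(q_*E)$. The normalised $\psi$ makes $q_*E$ a rank-two $\CO_{\overline S}$-module carrying an $\CO_{\overline S}$-linear involution $\tilde\sigma$ (the deck transformation covers the identity of $\overline S$), whose isotypic projectors $\tfrac12(1\pm\tilde\sigma)$ split $q_*E=L_+\oplus L_-$ into direct summands. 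Over the unramified locus $\tilde\sigma$ swaps the two sheets of $q$, so each $L_\pm$ has rank one; being direct summands of a locally free sheaf they have constant rank one everywhere, whence $W_1=\bar\pi_*L_+$ and $W_2=\bar\pi_*L_-$ are each of rank $m$ with $f=\pm1$ on them, giving $f\sim I_{m,m}$. Finally, $f\Phi f^{-1}=-\Phi$ forces $\Phi$ to interchange $W_1$ and $W_2$, producing the off-diagonal shape (\ref{Higgs:eq}).

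The main obstacle is the key intertwining statement — getting the pullback/pushforward conventions and the sign right — together with verifying that $\sigma^*E\cong E$ yields precisely the balanced splitting rather than some $U(p,q)$ with $p\neq q$. For the latter the cleanest safeguard is the rank computation on $\overline S$ above; alternatively one can note that the smoothness of $S$ forces $a_m\not\equiv0$, so $\eta=0$ is not a root of (\ref{eq:spec2}) and $\Phi$ is generically invertible, which already excludes $p\neq q$.
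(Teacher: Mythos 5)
Your proposal is correct and follows essentially the same route as the paper: your normalised isomorphism $\psi$ with $\psi\circ\sigma^*\psi=\mathrm{id}$ is precisely the paper's choice of lift of $\sigma$ to $E$, your eigensheaf splitting of $q_*E$ and its pushforward to $W_1\oplus W_2$ reproduces the paper's invariant/anti-invariant decomposition of $\pi_*E$ (the factorisation through $\overline S$ appears in the paper immediately after the proposition), and your intertwining lemma is the paper's converse argument via the cokernel of $\eta I-\pi^*\Phi$ together with $\Theta$-fixedness from Remark~\ref{rem:fix}. The only cosmetic difference is that you obtain ${\rm rk}\,W_\pm=m$ from constancy of rank of direct summands of a locally free sheaf rather than by counting the $2m$ points of a generic fibre with the swap involution, and both checks are sound.
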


\begin{proof} For $S,E$ as above, there are two possible lifts of the action of $\sigma$ to $E$ which differ by $\pm 1$. By abuse of notation we shall denote the choice of lifted action on $E$  also  by $\sigma$. On an invariant open set  $\pi^{-1}(\mathcal{U})\subset S$  we may decompose the sections of $E$ into the invariant and anti-invariant parts, labeled by the upper indices $\pm$ as follows
\[H^{0}(\pi^{-1}(\mathcal{U}),E)=H^{0}(\pi^{-1}(\mathcal{U}),E)^{+} \oplus H^{0}(\pi^{-1}(\mathcal{U}),E)^{-}.\]
From the definition of direct image there is an equivalent  decomposition of $H^{0}(\mathcal{U},\pi_{*}E)$ into
$H^{0}(\mathcal{U},\pi_{*}E)=H^{0}(\mathcal{U},\pi_{*}E)^{+}\oplus H^{0}(\mathcal{U},\pi_{*}E)^{-}.$
Hence 
$\pi_{*}E=W_{+}\oplus W_{-},$
where $W_{\pm}$ are vector bundles on $\Sigma$. At a point $x$ such that $a_m(x)\ne 0$, the involution $\sigma$ has no fixed points on $\pi^{-1}(x)$. Moreover,  if $x$ is not a branch point, $\pi^{-1}(x)$ consists of $2m$ points $e_1,\dots, e_m, \sigma e_1,\dots \sigma e_m$. The fibre of the direct image is then isomorphic to $\mathbb{C}^m\oplus \mathbb{C}^m$, and the involution is $\sigma: (v,w)\mapsto (w,v)$. Then, the fibre of $W_+$ is given by the invariant points $(v,v)$, and the one of $W_{-}$ is given by the anti-invariant points $(v,-v)$, and so ${\rm rk} W_+ ={\rm rk} W_- =m$.

The Higgs field associated to $E$ is defined as in the case of classical Higgs bundles through  the multiplication map 
$E\xrightarrow{\eta} E\otimes \pi^{*}K.$
Since  $\sigma(\eta)=-\eta$, the Higgs field $ \Phi$ maps $W_{+}\mapsto W_{-}\otimes K$ and $W_{-}\mapsto W_{+}\otimes K$ and  thus it may be written  as in (\ref{Higgs:eq}).

Conversely,  as a classical Higgs bundle,  a $U(m,m)$-Higgs bundle $(V,\Phi)$ with smooth spectral curve $S$ as in (\ref{eq:spec2}) has associated a line bundle $E$ on $S$ defined as the cokernel of $\eta I -\pi^*\Phi$ in $\pi^*V\otimes \pi^*K$.  The involution $\sigma$ transforms the line bundle $E$ for eigenvalue $\eta$ to $\sigma^* E$ for eigenvalue $-\eta$. Furthermore, as noted before the isomorphism classes of $U(m,m)$-Higgs bundles are fixed by $\Theta:(V,\Phi)\mapsto (V, -\Phi)$    and thus
 $\sigma^{*}E\cong E$. \end{proof}

Following \cite{brad}, we say that a $U(m,m)$-Higgs bundle $(V,\Phi)$ as in Definition \ref{higgsupp} is stable if for every subbundle $V'=W_1'\oplus W_2'$ with $W_1'\subset W_1$ and $W_2'\subset W_2$, satisfying $\Phi(V')\subset V'\otimes K$, one has that
$ \deg V'/{\rm rk}V'< \deg V/{\rm rk}V.$ 
Since in Proposition \ref{PropE} we have considered an irreducible  curve $S$,  there is no proper subbundle of $V$ which is preserved by $\Phi$, and hence the $U(m,m)$-Higgs pair constructed through $E$ is stable.

 By means of the 2-fold cover $\rho:S \rightarrow \bar S$, the line bundle $E$ associated to a $U(m,m)$-Higgs bundle $(W_1\oplus W_2,\Phi)$ can be seen in terms of line bundles on   $\bar S$. The invariant and anti-invariant sections of $E$ give a decomposition
$H^{0}(\rho^{-1}(\mathcal{V}),E)=H^{0}(\rho^{-1}(\mathcal{V}),E)^{+}\oplus H^{0}(\rho^{-1}(\mathcal{V}),E)^{-}$, for $\mathcal{V}\subset \overline{S}$ an open set.
Then, by definition of direct image, there are two line bundles $E_+$ and $E_-$ on the quotient curve $\overline{S}$ such that  
$H^{0}(\rho^{-1}(\mathcal{V}),E)^{\pm}\cong H^{0}(\mathcal{V},E_\pm)$.
Moreover, by similar arguments as above, one has  $\rho_{*}E=E_+\oplus E_-$ on $\bar S$ for which $\bar \pi_* E_+ = W_1$ and $\bar \pi_* E_- =W_2$.  
 
\begin{remark}
Considering the maps $\beta\gamma$ and $\gamma\beta$, one obtains $K^2$-twisted Higgs bundles whose associated spectral curve is $\bar S$ (e.g., see  \cite{bobi}), with corresponding line bundles  $E_\pm$  on $\bar S$.
\end{remark}

\section{\texorpdfstring{THE ASSOCIATED INVARIANTS }{The associated invariant}}\label{invariant}

The topological invariants associated to $U(m,m)$-Higgs bundles as in Definition \ref{higgsupp} are the degrees $\deg W_1$ and $\deg W_2$. These invariants arise in a natural way in terms of the spectral data from the isomorphism  $\sigma^{*}E\cong E$ of the  line bundle obtained through Proposition \ref{PropE} and the quotient curve $\bar S$. At a fixed point $a\in S$ of the involution, there is a linear action of $\sigma$ on the fibre $E_a$ given by scalar multiplication of $\pm 1$.   We shall denote by $M$ the number of points at which the action of $\sigma$ is $-1$ on $E$.

\begin{proposition}\label{teo2}
The topological invariants of a $U(m,m)$-Higgs bundle $(W_1\oplus W_2, \Phi)$ with corresponding line bundle $E$ can be expressed in terms of the spectral data as follows
\begin{eqnarray}
  \deg W_1 &=&\frac{\deg E-M}{2}+(2m-2m^{2})(g-1),\label{degw1}\\
\deg W_2 &=&\frac{\deg E +M}{2}-2m^{2}(g-1). \label{degw2}
\end{eqnarray}
\end{proposition}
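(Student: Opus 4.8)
The plan is to reduce the statement to two degree computations for direct images: one along the $m$-fold cover $\bar\pi:\bar S\to\Sigma$, expressing $\deg W_i$ through $\deg E_\pm$, and one along the double cover $\rho:S\to\bar S$, expressing $\deg E_\pm$ through $\deg E$ and locating the invariant $M$ as a branch-point contribution.

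First I would handle the cover $\bar\pi$. Since $\bar\pi_*E_+=W_1$ and $\bar\pi_*E_-=W_2$, comparing $\chi(\bar S,E_\pm)=\chi(\Sigma,\bar\pi_*E_\pm)$ and applying Riemann--Roch on each curve gives, for the rank-$m$ bundles $W_i$,
\[
\deg W_1=\deg E_+ + 1 - g_{\bar S} + m(g-1),\qquad \deg W_2=\deg E_- + 1 - g_{\bar S} + m(g-1).
\]
Substituting $g_{\bar S}=(2m^2-m)(g-1)+1$ turns these into $\deg W_1=\deg E_+ + (2m-2m^2)(g-1)$ and $\deg W_2=\deg E_- + (2m-2m^2)(g-1)$. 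The statement therefore reduces to proving $\deg E_+=\tfrac{\deg E-M}{2}$ and $\deg E_-=\tfrac{\deg E+M}{2}-2m(g-1)$.

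Second I would compute $\deg E_\pm$ from the double cover $\rho$. The fixed points of $\sigma$ are exactly the branch points of $\rho$, namely the zeros of the tautological section $\eta$ of $\pi^*K$, of which there are $\deg\pi^*K=4m(g-1)$. Writing $R=R_+ + R_-$ for the splitting of this ramification divisor according to the sign $\pm1$ of the lifted $\sigma$-action on the fibre of $E$, we have $\deg R_-=M$ and $\deg R_+=4m(g-1)-M$. Pulling back the counit $\rho^*\rho_*E\to E$ and working in a local model $\sigma:z\mapsto -z$, $w=z^2$, one checks that the invariant (resp. anti-invariant) sections acquire an extra factor $z$ precisely at the points where $\sigma$ acts by $-1$ (resp. $+1$); this identifies
\[
\rho^*E_+\cong E\otimes\mathcal{O}_S(-R_-),\qquad \rho^*E_-\cong E\otimes\mathcal{O}_S(-R_+).
\]
Taking degrees, with $\deg\rho^*L=2\deg L$, yields $2\deg E_+=\deg E-M$ and $2\deg E_-=\deg E-(4m(g-1)-M)$, which are exactly the two identities needed; substituting them into the formulas of the previous paragraph gives \eqref{degw1} and \eqref{degw2}. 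As an independent check, their sum $\deg E_+ + \deg E_-=\deg E-2m(g-1)$ agrees with a direct evaluation of $\deg\rho_*E$ via $\chi(S,E)=\chi(\bar S,\rho_*E)$ using $g_S=4m^2(g-1)+1$.

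I expect the main obstacle to be the local branch-point analysis that produces the twisting divisors $R_\mp$. One must set up the $z\mapsto -z$ frame carefully and correctly track which of $E_+$ and $E_-$ picks up the factor $\mathcal{O}_S(-p)$ as a function of the sign of the action on $E_p$; it is precisely this bookkeeping --- that the $-1$ points lower $\deg E_+$ and the $+1$ points lower $\deg E_-$ --- that fixes the signs of $M$ in the two formulas. The remaining ingredients are routine Riemann--Roch together with the genus values already recorded above.
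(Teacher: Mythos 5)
Your proposal is correct, and it reaches the two formulas by a genuinely different route at the key step. The paper twists $E$ by $\rho^{*}L$ with $\deg L$ large enough to kill $H^{1}$, computes the \emph{sum} $\dim H^{0}(S,E\otimes\rho^{*}L)^{+}+\dim H^{0}(S,E\otimes\rho^{*}L)^{-}$ by Riemann--Roch and Serre duality, and obtains the \emph{difference} from the Atiyah--Bott holomorphic Lefschetz fixed point formula, with each of the $4m(g-1)$ fixed points contributing $\pm\tfrac{1}{2}$ according to the sign of the lifted action on $E$; solving this two-by-two system and using $\dim H^{0}(S,E\otimes\rho^{*}L)^{\pm}=\dim H^{0}(\bar S,E_{\pm}\otimes L)$ then gives (\ref{degw1})--(\ref{degw2}). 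You instead prove the sheaf-level identities $\rho^{*}E_{+}\cong E\otimes\mathcal{O}_{S}(-R_{-})$ and $\rho^{*}E_{-}\cong E\otimes\mathcal{O}_{S}(-R_{+})$ by the local model $\sigma:z\mapsto -z$, $w=z^{2}$ at the ramification points, and then only need Riemann--Roch for direct images along $\rho$ and $\bar\pi$. Your local bookkeeping is right: choosing an equivariant frame $e$ with $\sigma^{*}e=\epsilon e$ (which exists by averaging), the invariant sections near a fixed point are generated by $e$ or $ze$ according as $\epsilon=+1$ or $-1$, so the counit $\rho^{*}E_{+}\to E$ vanishes exactly on $R_{-}$, and taking degrees gives $2\deg E_{+}=\deg E-M$ and $2\deg E_{-}=\deg E+M-4m(g-1)$, which are precisely the paper's (\ref{degu1})--(\ref{degu2}); combined with your $\chi$-comparison along $\bar\pi$ this yields the proposition, and your consistency check on $\deg\rho_{*}E$ is also correct. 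What your route buys: it is elementary and self-contained, avoids both the auxiliary bundle $L$ and the Lefschetz theorem, and produces finer information than degrees alone --- the line-bundle identities themselves, from which (\ref{divisorD}) can be read off. What the paper's route buys: no local trivialization analysis at all, and a fixed-point technique that transfers directly to other real forms with more complicated fixed loci, which is how the author extends these methods elsewhere. One point worth making explicit in your write-up: the two lifts of $\sigma$ to $E$ differ by a sign, so $M$ is well defined only after fixing the lift, and your convention must match the one making $W_{1}$ the invariant part --- this is exactly the ambiguity recorded in Remark \ref{rem:sigma}, under which $M\leftrightarrow 4m(g-1)-M$ and the roles of $W_{1},W_{2}$ interchange.
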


\begin{proof}  From Proposition \ref{PropE}, the involution $\sigma$ preserves the line bundle $E$ and over its $4m(g-1)$ fixed points, which are the zeros of $a_{m}$ in (\ref{eq:spec2}),  it acts as $\pm 1$.   
Furthermore, over the fixed points   $\sigma$ acts as $+1$ on $\rho^{*}L$, for any line bundle $L$   on $\bar S$.  Hence, the involution acts as $-1$ on  $E\otimes \rho^{*}L$ over exactly $M$ points. Choosing $L$ of sufficiently large degree such that $H^{1}(S,E\otimes \rho^{*}L)$ vanishes, by Riemann-Roch and Serre duality we have that
 \begin{eqnarray}\dim H^{0}(S,E\otimes \rho^{*}L)^+ +\dim H^{0}(S,E\otimes \rho^{*}L)^{-}=\deg E+\deg L-4m^{2}(g-1),\label{mvw2}\end{eqnarray}
where as before the $\pm$ upper script labels  the $\pm1$ eigenspaces of $\sigma$ in $H^{0}(S,E\otimes \rho^*L)$.
From  \cite[Theorem 4.12]{at},  the holomorphic Lefschetz theorem gives us 
\begin{eqnarray}
 \dim  H^{0}(S,E\otimes \rho^{*}L)^+ - \dim H^{0}(S,E\otimes \rho^{*}L)^-=\frac{(-M)+(4m(g-1)-M)}{2}.
\end{eqnarray}
Hence, from (\ref{mvw2}) we have 
\begin{eqnarray}
\dim  H^{0}(S,E\otimes \rho^{*}L)^+&=&
 \frac{\deg E+\deg L-M}{2}+(m-2m^{2})(g-1),\nonumber\\
 \dim  H^{0}(S,E\otimes \rho^{*}L)^{-}&=&
\frac{\deg E+\deg L +M}{2}-(m+2m^{2})(g-1).\nonumber
\end{eqnarray}
Finally, recall that  $\rho_*E=E_+\oplus E_-$ satisfying $\bar \pi_* E_+=W_1$ and $\bar \pi_* E_-=W_2$. Since by construction $\dim H^0 (S,E\otimes \rho^{*}L)^\pm=\dim H^0(\bar S,E_\pm \otimes L) $,  applying Riemann-Roch and Serre duality,  we obtain (\ref{degw1}) and (\ref{degw2}) as required, and also  note that $M=\deg W_2-\deg W_1+2m(g-1).$
\end{proof}

\begin{remark}\label{mvar}\label{rem:sigma}
 Throughout this section  we have assumed that $W_1$ and $W_2$ are obtained via the invariant and anti-invariant sections of $E$, respectively. Interchanging the role of $W_1$ and $W_2$ corresponds to considering the involution $-\sigma$ acting on $S$, and the associated invariants $\deg E$ and $\overline{M}=4m(g-1)-M.$
\end{remark}

 For $m>1$,   the degrees of the bundles $E_+$ and $E_-$  are given by\begin{eqnarray}
 \deg E_+ &=  \deg W_1 +(2m^{2}-2m)(g-1) &= \frac{\deg E}{2}-\frac{M}{2},\label{degu1}\\
 \deg E_- &=  \deg W_2 +(2m^{2}-2m)(g-1) &= \frac{\deg E}{2}+\frac{M}{2}-2m(g-1).\label{degu2}
\end{eqnarray}
\begin{remark}The parity of the   $\deg E$ and   $M$  need to be the same.  \end{remark}

Given a stable $U(m,m)$-Higgs field $\Phi=(\beta,\gamma)$  as in Definition \ref{higgsupp},  if $\deg W_1\geq \deg W_2$ (else one can interchange $W_1$ and $W_2$),  then $\gamma\neq 0$ because otherwise the invariant subbundle $W_1$   would contradict stability.
Moreover, the section $\gamma$ induces a non-zero map $c\in H^{0}(\overline{S},E_+^*\otimes E_-\otimes \overline{\pi}^*K)$ obtained through the multiplication map $H^{0}(S,E)^+\xrightarrow{\eta} H^{0}(S,E\otimes \pi^* K)^{-}$. Hence, since $c$ is odd, it must vanish at the fixed points over which $\sigma$ acts as $-1$ on $E$. 

Since $\bar \pi$ is a degree $m$ cover, from (\ref{degu1}) and (\ref{degu2}) we have that $\deg (E_+^*\otimes E_-\otimes \overline{\pi}^*K)=M $. Therefore the section $c$ vanishes only at the $M$ points over which $\sigma$ acts as $-1$ on $E$, and defines a positive subdivisor $D$ of the divisor of  $a_{m}={\rm det}( \beta \gamma)$ giving the whole set of fixed points.
For $[D]$ the line bundle on $\bar S$ associated to the divisor $D$, it follows that 
\begin{eqnarray}[D]=E_+^*\otimes E_-\otimes \overline{\pi}^*K. \label{divisorD}\end{eqnarray}
 \begin{remark}For  $m=1$ the surface $\Sigma$ and the curve $\overline{S}$ coincide, and the spectral data is studied in \cite{N1}.  
\end{remark}


From the  study of the spectral data for $U(m,m)$-Higgs bundles,  we have the following:

\begin{theorem}\label{teoprin}
There is a 1-to-1 correspondence between isomorphism classes of  $U(m,m)$-Higgs bundles $(W_1\oplus W_2, \Phi)$ on $\Sigma$ with non-singular spectral curve  for which $\deg W_1 > \deg W_2 $, and triples $(\bar S,E_+,D)$ up to isomorphism,  where 
\begin{itemize}
 \item   $\bar \pi:\bar S\rightarrow \Sigma$ is a smooth $m$-fold cover of $\Sigma$ in the total space of $K^{2}$ defined by 
\begin{eqnarray}\xi^{m}+a_{1}\xi^{m-1}+\ldots+a_{m-1}\xi+a_{m}=0,
\label{eq:barS}\end{eqnarray}for $a_{i}\in H^{0}(\Sigma,K^{2i})$ and $\xi$ the tautological section of the pullback of $K^{2};$
 \item  $E_+$ is a line bundle on $\bar S$ whose degree is 
$\deg E_+ = \deg W_1 +(2m^{2}-2m)(g-1) ;$
\item \ $D$ is a positive subdivisor of the divisor of $a_{p}$ of degree $M=\deg W_2-\deg W_1+2m(g-1).$
\end{itemize}

 \end{theorem}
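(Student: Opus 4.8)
The plan is to construct explicit maps in both directions between isomorphism classes of $U(m,m)$-Higgs bundles with $\deg W_1 > \deg W_2$ and triples $(\bar S,E_+,D)$, and then to verify that the two constructions are mutually inverse. Most of the content has already been assembled: Proposition \ref{PropE} identifies the relevant line bundle $E$ on the spectral curve via the condition $\sigma^*E\cong E$, Proposition \ref{teo2} computes the degrees, and the discussion culminating in (\ref{divisorD}) produces the subdivisor $D$ together with the identity $[D]=E_+^*\otimes E_-\otimes \bar\pi^*K$. Thus the proof is largely a matter of organizing these ingredients and checking that no information is lost or created.

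For the forward map I would start from a stable $(W_1\oplus W_2,\Phi)$ with smooth spectral curve $S$ as in (\ref{eq:spec2}). Quotienting by $\sigma:\eta\mapsto -\eta$ produces the smooth $m$-fold cover $\bar\pi:\bar S\to\Sigma$ of (\ref{eq:barS}). The cokernel line bundle $E$ on $S$ satisfies $\sigma^*E\cong E$ by Proposition \ref{PropE}, and because $\deg W_1>\deg W_2$ there is a canonical choice of lift of $\sigma$ to $E$: since the two lifts differ by $\pm1$ and interchange the invariant and anti-invariant eigenbundles, I select the one for which the invariant eigenbundle $E_+$ of $\rho_*E$ pushes down to $W_1$; by (\ref{degu1})--(\ref{degu2}) this is the lift with $\deg E_+>\deg E_-$. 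This pins down $E_+$ unambiguously, with $\deg E_+$ as stated, and the section $c$ induced by $\gamma$ cuts out the positive subdivisor $D$ of the divisor of $a_m$ of degree $M=\deg W_2-\deg W_1+2m(g-1)$, yielding the triple.

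For the inverse map, given $(\bar S,E_+,D)$ I would first recover $E_-$ from (\ref{divisorD}) by setting $E_-:=E_+\otimes[D]\otimes\bar\pi^*K^{-1}$; the prescribed degree of $D$ guarantees that the resulting degrees match (\ref{degu1})--(\ref{degu2}), and determines $\deg E=2\deg E_++M$. The subdivisor $D\subset \mathrm{div}(a_m)$ records exactly the fixed points of $\sigma$ at which the action on $E$ must be $-1$, which is precisely the data needed to equip the double cover $\rho:S\to\bar S$ with a $\sigma$-equivariant line bundle $E$ whose $\pm1$ eigen-pushforwards are $E_+$ and $E_-$. Setting $V:=\pi_*E=\bar\pi_*E_+\oplus\bar\pi_*E_-=W_1\oplus W_2$ and letting $\Phi$ be the direct image of the tautological section $\eta$ returns a $U(m,m)$-Higgs bundle, the block off-diagonal form of $\Phi$ following as in the proof of Proposition \ref{PropE}; stability is automatic since $S$ is irreducible and $\Phi$ preserves no proper subbundle.

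The step I expect to be the main obstacle is the reconstruction of the $\sigma$-equivariant line bundle $E$ on $S$ from $(E_+,E_-)$ and the branch data $D$, together with the verification that it is genuinely inverse to the pushforward decomposition. Away from the branch locus the cover is \'etale and $E$ is recovered routinely; the delicate point is the gluing at the $4m(g-1)$ ramification points, where one must check that prescribing the $-1$ eigenvalue exactly along $D$ yields a well-defined line bundle whose eigen-decomposition reproduces $(E_+,E_-)$ and the correct section $c$ vanishing on $D$. Once this local analysis is in place, composing the two maps in either order returns the original data up to isomorphism, the only remaining subtlety being the choice of lift of $\sigma$. Here the strict inequality $\deg W_1>\deg W_2$, equivalently $M<2m(g-1)$, is essential: the two lifts interchange the roles of $E_+$ and $E_-$ and of $M$ and $\overline M=4m(g-1)-M$ as in Remark \ref{rem:sigma}, so exactly one of them satisfies the inequality, making the correspondence a bijection.
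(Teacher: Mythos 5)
Your forward map is essentially the paper's: extract $E$ via Proposition \ref{PropE}, record the fixed points where the lifted involution acts as $-1$ to obtain $D$, and use $\deg W_1>\deg W_2$ (equivalently $M<2m(g-1)$) to pin down which eigen-pushforward is $E_+$. That direction is correct and matches the paper, including the observation that the two lifts of $\sigma$ interchange $(E_+,M)$ with $(E_-,\overline{M})$, so the strict inequality makes the choice canonical.

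The gap is in the converse. You correctly isolate the crux --- rebuilding a $\sigma$-equivariant line bundle $E$ on $S$ from $(\bar S,E_+,D)$ whose eigen-pushforwards are $(E_+,E_-)$ --- but you then defer it: ``once this local analysis is in place'' is precisely the content a proof must supply, and your claim that $D$ is ``precisely the data needed'' to equip the double cover with such an $E$ is the assertion to be proved, not an argument; prescribing eigenvalues at ramification points of a branched double cover is not routine gluing in the way it would be for an \'etale cover. The paper closes exactly this step with no local analysis at all, by running the spectral correspondence one level down: since $\bar S$ is smooth, $a_m$ has simple zeros, so $[a_m]=D+\overline{D}$; setting $E_-:=[D]\otimes E_+\otimes\bar\pi^*K^*$ as you do, the canonical sections of $[D]$ and $[\overline{D}]$ define $\bar\gamma:E_+\to E_-\otimes\bar\pi^*K$ and $\bar\beta:E_-\to E_+\otimes\bar\pi^*K$, and the resulting rank-$2$ $\bar\pi^*K$-twisted Higgs bundle $(E_+\oplus E_-,\bar\Phi)$ on $\bar S$ satisfies $\bar\beta\bar\gamma=a_m$ up to scale, hence has spectral curve exactly the double cover $\rho:S\to\bar S$. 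The Beauville--Narasimhan--Ramanan construction applied to $\bar\Phi$ then hands you the cokernel line bundle $E$ on $S$, automatically preserved by $\sigma$, with $\rho_*E=E_+\oplus E_-$ and with the $-1$ action occurring exactly over $D$ because $\bar\gamma$ vanishes there --- which is what makes the two maps mutually inverse. Your direct route could in principle be completed (e.g.\ set $E=\rho^*E_+\otimes[\tilde D]$ with $\tilde D$ the ramification points over $D$, and verify that the canonical lift of $\sigma$ acts as $-1$ precisely along $\tilde D$ and that the eigen-pushforwards recover $(E_+,E_-)$), but as written the pivotal step of the theorem is acknowledged rather than carried out.
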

\begin{proof}
 Starting with a $U(m,m)$-Higgs bundle, consider the line bundle $E$ as in Proposition \ref{PropE}, and let  $D$ be the positive subdivisor of the divisor of $a_{m}$ over which $\sigma$ acts as $-1$ on $E$ as in Proposition \ref{teo2}. As seen previously, the direct image  $\rho_{*}E$ on $\bar S=S/\sigma$ decomposes into  $\rho_{*}E=E_+\oplus E_-$, for $E_\pm$ line bundles on $\bar S$ satisfying 
$[D]=E_+^{*}\otimes E_-\otimes \bar \rho^{*}K. $
Note that  from equations (\ref{degu1})-(\ref{degu2}) since $\deg W_1>\deg W_2$ one has that $\deg E_+>\deg E_-$. Hence, considering the line bundle $E_+$ of biggest degree, and identifying ${\rm Pic}^{\deg E_+}(\bar S)$ with ${\rm Jac}(\bar S)$, one can construct the triple $(\bar S,E_+,D)$. 


Conversely, given a triple $(\bar S,E_+,D)$, since the curve $\bar S$ is smooth, the section $a_m$ has simple zeros  \cite{bobi}, and   we may  write
$[a_{m}]=D+\overline{D},$ 
for $\overline{D}$ a positive divisor on $\Sigma$. Furthermore, following (\ref{divisorD}) we define the line bundle $E_-$ on $\bar S$ by
$E_-= [D]\otimes E_+ \otimes \bar\pi^{*}K^{*}.$
One should note that in the case of $\deg W_1=\deg W_2$ the number of points in $D$ and $\bar D$ is the same, and thus the line bundle $E_-$ could be constructed through either of the two divisors, making the correspondence fail to be 1-to-1 in this case. 

On the curve $\bar S$ we may consider the sections associated to the divisors $D$ and $\bar D$, which  induce the natural maps
$
\bar \beta: E_-  \xrightarrow{} [\bar D]\otimes E_- = E_+  \otimes \bar\pi^{*}K $ and
$\bar \gamma: E_+ \xrightarrow{}  [D]\otimes E_+ =  E_-  \otimes \bar\pi^{*}K $
Then,  there is a natural rank $2$ Higgs bundle $(E_+\oplus E_-,\bar \Phi)$ whose Higgs field $\bar \Phi$  has  off diagonal entries $\bar \beta$  and $\bar \gamma$. Moreover, $\bar \beta \bar \gamma$ is given by $a_m$ up to scalar multiplication, and  the spectral curve of this $\bar \pi^*K$-twisted Higgs bundle $\bar \Phi$ is a curve $S$ whose quotient under $\sigma:\eta \mapsto -\eta$ gives $\bar S$.

Following the methods for classical Higgs bundles, $(E_+\oplus E_-,\bar \Phi)$ has an associated line bundle $E$ on $S$ which is preserved by the involution $\sigma$ on $S$ (e.g., \cite{bobi} ) and such that   $\rho_{*}E=E_+\oplus E_-$ via the invariant and anti-invariant sections and by similar arguments as the ones leading to (\ref{divisorD}), the involution $\sigma$ acts as $-1$ on $E$ over the divisor $D$, whence proving the proposition. 
\end{proof}   

\begin{remark}
By interchanging the involutions $\sigma$ and $-\sigma$, one can show that an equivalent correspondence exists in the case of $U(m,m)$-Higgs bundles for which $\deg W_1 < \deg W_2$.
\end{remark}

Following \cite{brad}, the Toledo invariant $\tau(\deg W_1,\deg W_2)$ associated to $U(m,m)$-Higgs bundles is  defined as $\tau (\deg W_1,\deg W_2):=\deg W_1-\deg W_2, $ and so from the previous calculations, this invariant  may be expressed as
$\tau (\deg W_1,\deg W_2)=-M+2m(g-1).$
 By definition,  $M$  satisfies
$0\leq M\leq 4m(g-1)$, and thus
\begin{eqnarray}0\leq |\tau (\deg W_1,\deg W_2)|\leq 2m(g-1),\end{eqnarray}
which agrees  with the  bounds given in \cite{brad} for the Toledo invariant.  

\begin{remark}
 The maximal Toledo invariant corresponds to $M=0$ and thus  from Section \ref{invariant} in this case  the non-zero map $c\in H^{0}(\overline{S},E_+^*\otimes E_-\otimes \overline{\pi}^*K)$ associated to a $U(m,m)$-Higgs bundle is an isomorphism. Then, the section $\gamma$ is nowhere vanishing, giving an isomorphism $W_1\cong W_2 \otimes K$. For $L_0$ a choice of square root of $K$ one can construct a Cayley pair $(\tilde{W_2},\tilde \theta)$,  a $K^2$-twisted Higgs bundle on $\Sigma$ naturally associated to the $U(m,m)$-Higgs bundle \cite{brad}.
 This is done by considering
   $\tilde \theta= [(\gamma \otimes I_K) \circ \beta]\otimes I_{L_0}~ :~ W_2\otimes L_0\rightarrow W_2\otimes L_0\otimes K^2$, for $I_K$ and $I_{L_0}$ the identities on $K$ and $L_0$. Moreover, the spectral curve of the Cayley pair is $\bar S$, the corresponding line bundle  is $E_-\otimes \bar \pi^* L_0 $, and  Theorem \ref{teoprin} provides a realisation of the Cayley correspondence. 
 
   \end{remark}

\begin{remark}
The methods developed here to identify the topological invariants associated to $U(p,p)$-Higgs bundles in terms of an action on fixed points can be extended to the study of Higgs bundles for other real forms, and we do this in \cite{laura} and \cite{nonabelian}. 
\end{remark}
\newpage
%


\section{\texorpdfstring{THE MODULI SPACE $\mathcal{M}_{U(m,m)}$}{The moduli space of U(m,m)-Higgs bundles}}\label{ap1}
\label{defEE}


Through dimensional arguments one can show that the space of isomorphism classes of $U(m,m)$-Higgs bundles satisfying Theorem  \ref{teoprin} is included in the moduli of stable $U(m,m)$-Higgs bundles $\mathcal{M}_{U(m,m)}$ as a Zariski open set. We shall denote by $\CA' \subset \CA$ the points in the Hitchin base of the classical Hitchin fibration over which $\CM_{U(m,m)}$ lies, this is, 
 \begin{eqnarray}\CA'=\bigoplus_{i=1}^{m}H^{0}(\Sigma,K^{2i}).\label{aprime}\end{eqnarray}

 \begin{proposition}
The dimension of the parameter space of the triples $(\bar S,E_+, D)$ associated
to $U(m,m)$-Higgs bundles as in Theorem \ref{teoprin} is, as expected,  $4m^{2}(g-1)+1$.
\end{proposition}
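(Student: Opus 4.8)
The plan is to compute the dimension of the parameter space of triples $(\bar S, E_+, D)$ by summing the dimensions of the three pieces of data, and to check that the total agrees with $4m^2(g-1)+1$, which is the known dimension of a generic fibre of the $GL(2m,\C)$ Hitchin fibration (equivalently, $\dim \CA' + \dim \mathrm{Jac}(\bar S)$ should reorganize correctly). First I would count the parameters for the spectral curve $\bar S$: the curve is cut out by equation \eqref{eq:barS} with coefficients $a_i \in H^0(\Sigma, K^{2i})$ for $i=1,\dots,m$, so the dimension of this part of the base is $\sum_{i=1}^m \dim H^0(\Sigma, K^{2i})$. By Riemann--Roch, since $\deg K^{2i} = 2i(2g-2) > 2g-2$ for $i \geq 1$, each $H^1$ vanishes and $\dim H^0(\Sigma, K^{2i}) = (2i)(2g-2) - (g-1) = (4i-1)(g-1)$, except for $i=1$ where one must be mildly careful but $\dim H^0(\Sigma, K^2) = 3(g-1)$ still fits the formula. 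Summing gives $\sum_{i=1}^m (4i-1)(g-1) = (g-1)\bigl(4\cdot\tfrac{m(m+1)}{2} - m\bigr) = (g-1)(2m^2 + m)$.

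Next I would count the choices of $E_+$ and $D$ together. The line bundle $E_+$ of fixed degree varies in $\mathrm{Jac}(\bar S)$, contributing $g_{\bar S} = (2m^2 - m)(g-1) + 1$ parameters, using the genus formula established earlier in the excerpt via adjunction. The divisor $D$ is a positive subdivisor of the divisor of $a_m$ of degree $M$, and since the zeros of $a_m$ are simple points on $\bar S$ the choice of such a $D$ is a choice of $M$ points among the fixed points of $\sigma$, a discrete datum. Crucially, this is a discrete choice and so contributes $0$ to the dimension; however, one must be careful not to double-count, since $E_-$ (hence the full Higgs bundle) is determined by $E_+$, $D$, and $\bar S$ through the relation \eqref{divisorD}, so no additional continuous parameters enter. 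Adding the continuous contributions gives
\begin{eqnarray}
(2m^2+m)(g-1) + (2m^2-m)(g-1) + 1 = 4m^2(g-1) + 1,
\nonumber
\end{eqnarray}
as claimed.

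The main subtlety — and the step I expect to require the most care — is justifying that the parameter space really is a fibration with these dimensions rather than a naive product, i.e. that over each smooth $\bar S$ the Jacobian varies in families so that the total space has dimension $\dim \CA' + g_{\bar S}$, and that the discrete datum $D$ genuinely decouples dimensionally. Concretely I would argue that for fixed $\bar S$ the pairs $(E_+, D)$ fibre over $\mathrm{Jac}(\bar S)$ with finite (discrete) fibres indexed by the $\binom{4m(g-1)}{M}$ choices of $M$ of the simple zeros of $a_m$, so the relative dimension over $\CA'$ is exactly $g_{\bar S}$. The only remaining point is to confirm the arithmetic consistency of $\dim \CA'$ with $(2m^2+m)(g-1)$, which I would double-check against the identity $\dim \CA' + g_{\bar S} = \dim_{\C}\mathrm{Jac}(S)$ for the full spectral curve $S$, giving independent confirmation that the count reproduces the expected half-dimension $4m^2(g-1)+1$ of the ambient moduli space.
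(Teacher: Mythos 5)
Your proposal is correct and follows essentially the same route as the paper's proof: count $\dim \CA' = (2m^2+m)(g-1)$ for the curve $\bar S$, add $g_{\bar S} = (2m^2-m)(g-1)+1$ for the choice of $E_+$ in ${\rm Jac}(\bar S)$, and observe that the subdivisor $D$ is a discrete choice contributing nothing to the dimension. Your explicit Riemann--Roch evaluation of $\dim H^0(\Sigma, K^{2i})$ and the cross-check against $g_S$ and the half-dimension $\tfrac{1}{2}\dim \mathcal{M}_{GL(2m,\C)} = 4m^2(g-1)+1$ merely make explicit what the paper states directly.
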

\begin{proof}
 Since by Bertini's theorem a generic point in $\CA'$ gives a smooth curve $\bar S$, the  parameter space of $\bar S$  has  dimension $\dim \CA'=(2m^{2}+m)(g-1)$.
The choice of the divisor $D$ gives a partition of the zeros of $a_{m}$, and from Theorem \ref{teoprin}, the choice of $E_+$ is given by an element in ${\rm Jac}(\bar S)$, which has dimension
$g_{\bar S}= 1+(2m^{2}-m)(g-1).$
Thus, the parameter space  of $(\bar S,E_+,D)$ has dimension 
$(2m^{2}+m)(g-1)+1+(2m^{2}-m)(g-1)=4m^{2}(g-1)+1.$
Moreover, 
${\rm dim} \mathcal{M}_{GL(2m,\mathbb{C})} ={\rm dim}GL(2m,\mathbb{C})(g-1)=8m^{2}(g-1)+2,$
 and the expected dimension of the moduli space of $U(m,m)$-Higgs bundles is
$4m^{2}(g-1)+1.$ 

Finally, since the the discriminant locus in the base $\CA'$ which guarantees a smooth spectral curve  is given by algebraic equations, the   space of isomorphism classes of $U(m,m)$-Higgs bundles satisfying  Theorem \ref{teoprin} is included in the moduli space $\mathcal{M}_{U(m,m)}$ as a Zariski open set.
 \end{proof}

From the construction of the spectral data in Theorem \ref{teoprin} we have the following:

\begin{theorem}\label{something4}
Each pair of invariants $\deg E$, $M$ as in Proposition \ref{teo2} labels exactly one connected component of the moduli space of $U(m,m)$-Higgs bundles which intersects the non-singular fibres of the Hitchin fibration
$\mathcal{M}_{GL(2m,\mathbb{C})}\rightarrow \mathcal{A}_{GL(2m,\mathbb{C})}.$
This component is given by a fibration of Jacobians over the total space of a vector bundle on the symmetric product $S^M\Sigma$. \end{theorem}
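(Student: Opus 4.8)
The plan is to realise the parameter space of the triples $(\bar S,E_+,D)$ from Theorem \ref{teoprin} as an explicit tower of fibrations, read off its connectedness, and then transport this to $\mathcal M_{U(m,m)}$ through the correspondence of that theorem. I fix the pair $(\deg E, M)$; by Proposition \ref{teo2} this is the same as fixing the topological type $(\deg W_1,\deg W_2)$, and in particular it fixes the degree $\deg E_+=\tfrac12(\deg E-M)$ of $E_+$ together with the degree $M$ of $D$.

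First I would fibre the triples over the symmetric product by the assignment $(\bar S,E_+,D)\mapsto D$, which lands in $S^{M}\Sigma$ since $D$ is a positive divisor of degree $M$ on $\Sigma$. Over a fixed $D\in S^M\Sigma$, prescribing a curve $\bar S$ as in (\ref{eq:barS}) with $D\leq {\rm div}(a_m)$ is the same as choosing $a_1,\dots,a_{m-1}$ freely and $a_m\in H^0(\Sigma,K^{2m}(-D))$, a section of $K^{2m}$ vanishing along $D$. These spaces assemble into a vector bundle $\mathcal W\to S^M\Sigma$ with fibre $\bigoplus_{i=1}^{m-1}H^0(\Sigma,K^{2i})\oplus H^0(\Sigma,K^{2m}(-D))$, and its total space ${\rm Tot}(\mathcal W)$ parametrises the pairs $(\bar S,D)$. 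Over the locus where $\bar S$ is smooth, nonempty and open by Bertini as in the preceding proposition, the remaining datum $E_+\in{\rm Pic}^{\deg E_+}(\bar S)\cong{\rm Jac}(\bar S)$ is controlled by the relative Jacobian, which produces the claimed fibration of Jacobians over ${\rm Tot}(\mathcal W)\to S^M\Sigma$.

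Connectedness then propagates up the tower: $S^M\Sigma$ is connected as the symmetric product of a connected curve, ${\rm Tot}(\mathcal W)$ is connected because its fibres are vector spaces, and the relative Jacobian is connected because its fibres are connected abelian varieties over a connected base. Hence the parameter space $\mathcal P$ of triples with the fixed invariants is connected. By Theorem \ref{teoprin} and the preceding proposition, $\mathcal P$ is identified with the Zariski-open locus of $\mathcal M_{U(m,m)}$ of topological type $(\deg W_1,\deg W_2)$ meeting the smooth fibres of the Hitchin fibration. Since a connected set cannot be split among disjoint components, $\mathcal P$ lies in a single component, which is therefore the unique component meeting the smooth fibres and is nonempty by the construction; this proves both assertions. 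The same argument applies after interchanging $\sigma$ and $-\sigma$ when $\deg W_1<\deg W_2$, and the two-to-one correspondence in the case $\deg W_1=\deg W_2$ noted after Theorem \ref{teoprin} leaves connectedness of the component untouched.

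It is worth noting that in the regime of Theorem \ref{teoprin} one has $M<(4m-2)(g-1)$, so $\deg(K^{2m}(-D))>2g-2$ and hence $H^1(\Sigma,K^{2m}(-D))=0$; by Riemann--Roch $\dim H^0(\Sigma,K^{2m}(-D))$ is then constant in $D$ and $\mathcal W$ is genuinely locally free, so no rank-jumping occurs. The step I expect to be the main obstacle is instead the passage to families: one must check that the set-theoretic bijection of Theorem \ref{teoprin} upgrades to an isomorphism of the corresponding moduli functors, so that the holomorphic tower built above coincides with the subvariety structure that $\mathcal P$ inherits from $\mathcal M_{U(m,m)}$. This is what legitimately transfers both the fibration description and the connectedness conclusion to the moduli space, and it is where the genuine work lies.
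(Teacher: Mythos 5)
Your proposal follows essentially the same route as the paper's proof: both replace the summand $H^{0}(\Sigma,K^{2m})$ of $\CA'$ by a vector bundle over $S^{M}\Sigma$ with fibre $H^{0}(\Sigma,K^{2m}(-D))$ (your $\mathcal{W}$ just bundles in the trivial summands $a_{1},\dots,a_{m-1}$, and your $H^{1}$-vanishing via $\deg K^{2m}(-D)>2g-2$ matches the paper's Riemann--Roch/Serre duality step giving rank $(4m-1)(g-1)-M$), and then fibre Jacobians of the smooth curves $\bar S$ over this total space. Your explicit connectedness argument up the tower and the remark about upgrading the pointwise bijection of Theorem \ref{teoprin} to families make precise what the paper leaves implicit, but the construction and conclusion are the same.
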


\begin{proof}
In order to describe the connected component of $\CM_{U(m,m)}$ which intersects the regular fibres of the classical Hitchin fibration for fixed invariants $\deg E$ and $M$, we shall analyse the parameter space for the choices of $(\bar S, D, E_+)$. 
From its construction, the choice of the curve $\bar S$ is given by the space
$\CA'$ as in (\ref{aprime}). 
Then, since the degree $\deg E_+$ is fixed,  the choice of $E_+$ is given by a fibration $\CJ$ whose fibres are the Jacobians ${\rm Jac }(S_a)$ of the smooth curves $S_a$ defined by each $a\in \CA'$. 
We shall see now that the choice of the divisor $D$ of degree $M$ is equivalent to replacing $H^{0}(\Sigma, K^{2m})$ in (\ref{aprime}) by a vector bundle over the symmetric product $S^M \Sigma$.

Let  $a_m$ be a fixed differential corresponding to some  $a=(1_1,\dots, a_m)\in\CA'$ for $a_i\in H^{0}(\Sigma, K^{2i})$. In order to construct the divisor $D$, one has to select  a subdivisor of $a_{m}$ consisting of $M$ points, such that $[a_m]= D+\bar D$ for a positive divisor $\bar D$, and hence choose  $x\in S^M \Sigma$. In particular, since it is a ramification subdivisor, $D$ and $\bar D$ can be also thought of as divisors on $S$ and $\bar S$.

Following the proof of Theorem \ref{teoprin} and the ideas in \cite[Proposition 10.2]{N1}, the divisor $D$ defines $\bar \gamma: E_+ \xrightarrow{}  [D]\otimes E_+$ up to scalar multiple. The fibre of this point $x\in S^M \Sigma$ is the choice of $\bar \beta $ up to scalar multiple, which corresponds to $\bar D$,  and  gives us a point in a vector bundle $B$ of rank $(4m-1)(g-1)-M$ over $S^M\Sigma$ whose fibres are $H^0(\Sigma, K^{2m}\mathcal{O}(-x))$, for $x\in S^M \Sigma$.

Starting with an effective divisor $D$ given by $M$ points in $\Sigma$,  consider the  exact sequence 
$0\rightarrow K^{2m} (-D) \rightarrow K^{2m}  \rightarrow K^{2m}|_D  \rightarrow 0,$
and the corresponding sequence of cohomology groups
\[0\rightarrow H^0(\Sigma, K^{2m}(-D))\rightarrow H^{0}(\Sigma, K^{2m})\rightarrow H^{0}(D, K^{2m})\rightarrow H^{1}(\Sigma, K^{2m}(-D))\rightarrow \ldots\]
Note that since $\deg D = M < 2m(g-1)$ and $m>1$, by Riemann Roch and Serre duality    $\dim H^1(\Sigma, K^{2m}(-D)) =0$, leading to the short exact sequence
 \[0\rightarrow H^0(\Sigma, K^{2m}(-D))\xrightarrow{f_1} H^{0}(\Sigma, K^{2m})\xrightarrow{f_2} H^{0}(D, K^{2m})\rightarrow 0.\]
 Then, for a section  $s\in H^0(\Sigma, K^{2m}(-D))$ in the $(4m-1)(g-1)-M$-dimensional fibre over $D$, the injective map $f_1$ defines a corresponding differential $a_m:=f_1(s)\in H^0(\Sigma, K^{2m})$.
 
 From the above analysis, the connected parameter space of the triples $(\bar S, D, E_+)$ for fixed topological invariants can be obtained by replacing $H^0(\Sigma, K^{2m})$ in the base (\ref{aprime}) by the vector bundle $B$ over $S^M \Sigma$, and looking at the Zariski open set there which gives smooth curves $\bar S$.    \end{proof}
%

   \section{\texorpdfstring{ SPECTRAL DATA FOR $SU(m,m)$}{Spectral data for $SU(m,m)$}}\label{ap2}

An $SU(m,m)$-Higgs bundle is given by a $U(m,m)$-Higgs bundle $(W_1\oplus W_2,\Phi)$ for which $\Lambda^{m}W_1\cong \Lambda^{m}W_2^{*}$.
In particular, we have that $\deg W_1=-\deg W_2$ and thus one can adapt Theorem \ref{teoprin}  to obtain   $\deg E=(4m^{2}-2m)(g-1),$ and $M = 2 \deg W_2 +2m(g-1)$.
Moreover, 
$\Lambda^{m}\overline{\pi}_{*}E_+\cong
\Lambda^{m}\overline{\pi}_{*}E_-.$ 
Considering the
norm map $Nm:{\rm Pic}(\overline{S})\rightarrow {\rm Pic}(\Sigma)$,  from \cite[Section 4]{bobi} the determinant bundles are given by $ \Lambda^{m}\overline{\pi}_{*}E_\pm= {\rm Nm}(E_\pm)\otimes K^{-m(m-1)} 
$, identifying divisors of $\Sigma$ and their
corresponding line bundles, and thus
${\rm Nm}(E_+)= -{\rm Nm}(E_-)+2m(m-1)K.$ 
 In terms of divisors on $\Sigma$,  one has
$
 D={\rm Nm}~E_+^{*}+{\rm Nm}~ E_- + mK.\label{uno}
$
Therefore, the condition for the spectral data of Theorem \ref{teoprin} to give an $SU(m,m)$-Higgs bundle can be expressed as
\begin{eqnarray}
2{\rm Nm}~E_+=m(2m-1)K-D,\label{ecu1}
\end{eqnarray}
or equivalently $ {\rm Nm}~([D]\otimes E^{2}_{+}\otimes\overline{\rho}^{*}K^{1-2m})=0. $ 
The choice of $E_+$ is thus determined by the choice of an element in $\ker ( {\rm Nm})$, i.e., in ${\rm Prym}(\overline{S},\Sigma)$.
 As in the case of  $U(m,m)$-Higgs bundles, in this case the choice of $D$  is given by a point in a symmetric product of  $\Sigma$,
and the section $a_{m}$ is given by the divisor $D$ together with a section of $K^{2m}[-D]$ (whose divisor is $\bar D$).

 Since  for maximal Toledo invariant  $\tau(\deg W_1,\deg W_2)=\deg W_2-\deg W_1= 2m(g-1),$   in this case the divisor $D$ 
 is empty and thus (\ref{ecu1}) reduces to
$2{\rm Nm}E_+= m(2m-1)K.$ 
Hence, for each choice of square root of $m(2m-1)K$, 
the line bundle $E_+$  is determined by an element in the Prym variety ${\rm Prym}(\bar S, \Sigma)$. From Section \ref{defEE}, in this case the choice of $\bar S$ and $D$ is given by a point in a Zariski open subset of $\CA'$ as in (\ref{aprime}).  Therefore, for each of the $2^{2g}$ choices one has a copy of  ${\rm Prym}(\bar S,\Sigma)$, giving a fibration over the Zariski open in $\CA'$ whose fibres are the disjoint union $2^{2g}$ Prym varieties. Note that if $m$ is even, there is a distinguished choice of square root. 
%
%
%
%
For $m=1$, the study of these components is done through the monodromy action in \cite{lau11}. 

 \begin{remark}
 As noted in \cite[Remark 31]{ABA}, the spectral data defined in previous sections  can be used,  in physics terminology,  to give a geometric realisation of $(B,A,A)$-branes and understand their intersection with the $(A,B,A)$-branes defined in the above paper. 
 \end{remark}

\end{document}